\documentclass[12pt,reqno]{amsart}
\usepackage[a4paper,bindingoffset=0.1in,left=1.2in,right=1.3in,top=1.4in,bottom=1.5in,footskip=.25in]{geometry}
\usepackage{indentfirst,amssymb,amsfonts,amsmath,amsthm}    
\usepackage{amsmath,amssymb}
\usepackage{newtxtext,newtxmath}
\usepackage{setspace}
\usepackage{times}
\usepackage[utf8]{inputenc}
\usepackage{stackengine}
\usepackage[T1]{fontenc}
\usepackage{verbatim}
\usepackage{hyperref}
\hypersetup{colorlinks=true, linkcolor=blue,citecolor=blue, urlcolor=blue}
\urlstyle{same}
\usepackage{mathtools}
\numberwithin{equation}{section}

\newtheorem{thm}{Theorem}[section]

\newtheorem{prop}{Proposition}[section]

\newtheorem{coro}{Corollary}[section]

\DeclareMathOperator{\tr}{Tr}
\DeclareMathOperator{\mtr}{tr}
\DeclareMathOperator{\gal}{Gal}

\DeclareMathOperator{\End}{End}

\makeatletter
\g@addto@macro{\endabstract}{\@setabstract}
\newcommand{\authorfootnotes}{\renewcommand\thefootnote{\@fnsymbol\c@footnote}}%
\makeatother

\begin{document}
\setcounter{page}{1} 

\baselineskip .65cm 
\pagenumbering{arabic}
\title{    Linear Algebra and Galois Theory}
\author [Ashish Gupta and Sugata Mandal]{Ashish Gupta and Sugata Mandal}

\address{Ashish Gupta, Department of Mathematics\\
Ramakrishna Mission Vivekananda Educational and Research Institute (Belur Campus) \\
Howrah, WB 711202\\
India}
\email{a0gupt@gmail.com \thanks{}}
\address{Sugata Mandal, Department of Mathematics\\
Ramakrishna Mission Vivekananda Educational and Research Institute (Belur Campus) \\
Howrah, WB 711202\\
India}
\email{gmandal1961@gmail.com\thanks{}}

\maketitle
\begin{abstract}
   In \cite{GQ2008} R. Gow and R. Quinlan have cast a new look on the endomorphism algebra of a $K$-vector space $V$ of dimension $n$ assuming that $K$ has a Galois extension $L$ of degree $n$. In this approach the $K$-space $L$ may serve as a model  for $V$ and Galois-theoretic ideas and results may be applied to elucidate the structure of endomorphisms and other important objects of linear algebra. In particular, this leads to the clarification of the structure of a rank-one endomorphism, trace of an endomorphism, criteria for linear indepedence etc. We present an exposition of these results using the language of tensor algebra wherever possible to provide shorter and more conceptual proofs.
   
   \noindent \textbf{Keywords.} : Field; Galois extension; Cyclic extension; Galois group; Hyperplane; Tensor algebra\\ 
\noindent\textbf{2020 Math. Subj.
Class.}: 12F10; 15A03; 15A04
\end{abstract}
\section{Introduction}


Let $K$ be a field (with arbitrary characteristic) admitting a Galois extension $L$ of degree $n$ and let $V$ be an $n$-dimensional $K$-space. Then $L$ may serve as a model for $V$. This allows for a new look on the fundamental notions and concepts of linear algebra in the light of Galois theory. Such was the goal of the study undertaken in \cite{GQ2008} where there was a special focus on endomorphisms and their properties as well certain determinants that can be formed using the elements of the Galois group. Some of the results below require $L$ to be cyclic.

We begin by noting as in \cite{GQ2008} that any linear functional $f \in L^\ast$ may be written in the form $ f = f_b$ for suitable $ b \in L$, where $f_b(x) = \tr (bx)$ and $\tr$ is the trace map of $L/K$ defined as 
\[ \tr (x)= \displaystyle \sum_{\sigma \in \gal(L/K)} \sigma(x). \] 
Indeed
$L^\ast = \hom_K(L,K)$ is an  $L$-space with the action $f \mapsto f.b$ where   
\[ f.b(x) = f(bx). \]
Moreover, it is clear that $\dim_L(L^\ast) = 1$ and as $\tr$ is in general a nonzero functional in $L^\ast$, therefore $L^\ast = \tr .L$.  Let $\theta : L \rightarrow L^\ast$ be the map defined by $b \rightarrow \tr.b \ (b \in L)$. As the $\tr$ bilinear form of a  Galois extension is non-degenerate it follows that $\theta$ is an isomorphism of $K$-spaces. 
In particular, $\{\tr .b_1, \cdots, \tr .b_n \}$ is a K-basis of $L^\ast$ whenever $b_1, \cdots, b_n$ is a K-basis of $L$.

Let $V$ and $W$ be $K$-spaces. Recall that an element $z \in V \otimes_K W$ is called \emph{decomposable} if it decomposes as \[ z = x \otimes y, \quad x\in V,y \in W.\]
In the following $\otimes$ always means  $\otimes_K$.  Recall that the space  $$ \displaystyle\mathcal{T}_{p}^{q}(V) =( \otimes^p V) \otimes ( \otimes^q V^{\ast})$$ is called the space of tensors of type $(p,q)$ on $V$. Given $a \in V$ and $f \in V^{\ast}$, the \emph{contraction} of the $(1,1)$-tensor $(a \otimes f) \in V \otimes V^{\ast}$ means $f(a)$.
\subsection{Rank-one endomorphisms and a decomposition of $\End_K(L)$} \label{rank-1-endo}


 As is well-known $ L \otimes L^{\ast}\cong\End_K(L)  $ as $K$-spaces via the map $(a \otimes f) \rightarrow \psi_{a,f}$, where $ \psi_{a,f}(x)= f(x) a \ (x \in L)$.  Moreover the nonzero decomposable elements correspond precisely to the rank-one endomorphisms. It follows that the latter may be written as $a \otimes \tr.b$ for suitable  $a, b \in L$. Thus $( a \otimes \tr.b ) (x) = \tr (bx)a$ for all $x \in L$. Since the contraction of a $(1,1)$-tensor is its trace (as a linear operator) it follows that $\mtr( a \otimes \tr.b)= \tr(ab)$. 
 If $\{b_1,b_2,\ldots,b_n\}$ is a basis of  the $K$-space $L$ then as mentioned previously    $\{ \tr.b_1, \tr.b_2,\ldots, \tr.b_n\}$ forms a $K$-basis of $ L^{\ast}$ thus by defining property of the tensor product  an endomorphism $\tau \in \End_{K}(L)$ has a  expression: $$\tau= a_1 \otimes \tr.b_1 + a_2 \otimes \tr.b_2 + \ldots + a_n \otimes \tr.b_n,$$ where $ a_1,\ldots a_n \in L$ are uniquely determinable. It follows that $$\mtr(\tau)= \sum_{i=1}^{k}\tr(a_ib_i).$$
\subsection{Dual bases}  
Let $\{e_1,\cdots,e_n\}$ be a basis of the $ K$-space $ L$ and $ \{\epsilon_1,\cdots,\epsilon_n\}$ be the corresponding dual basis of $ L^{\ast}$. Then $ \epsilon _i$ may be expressed in the form $\tr. c_i$ where $ c_i \in L$. Let $c_i = \displaystyle \sum_{j=1}^{n} u_{ij} e_j$. Therefore $\forall i$,  \begin{align*}
   \delta_{ik}= ( \tr c_i)(e_k)= \tr (\sum_{j=1}^{n} (u_{ij} e_j)e_k) = \sum_{j=1}^{n} u_{ij} \tr (e_je_k).   
\end{align*}
Hence $U=(u_{ij}) = A^{-1}$, where the matrix $ A =( \tr(e_j e_k))$.

For example, for the cyclotomic extension $ \mathbb{Q} (\eta_{p})$ over $ \mathbb{Q}$ where $ \eta_p$ is a primitive $ p$-th root of unity we can take the basis
$ e_i = \eta_p^{i-1}$ with $1\leq i \leq n$. Then by \emph{\cite[Example 9.1.121]{vb}} we have
$$
\tr(e_i e_k)=
\begin{cases}
p-1, \quad  \text{if} ~i+k -2 \equiv 0~ ( 
 \text{mod}~p ), \\
-1,  \quad \text{otherwise}.
\end{cases}
$$
 By direct computation for $p  \geq 5 $ the matrix $U$ is as follows: 
 
 $$ U= { \begin{pmatrix}
    X & Y \\
    Y^{t} & Z
\end{pmatrix}}_{(p-1) \times (p-1)},$$
where $$ X= {\begin{pmatrix}
    p^{-1} & -p^{-1}\\
    -p^{-1} & -2p^{-1}
\end{pmatrix}}_{2\times 2}, $$
$$ Y = {\begin{pmatrix}
    0 & 0 & \ldots 0\\
    -p^{-1} & -p^{-1} & -p^{-1}
\end{pmatrix}}_{2 \times (p-3)} $$
and 
 \[ Z = { \begin{pmatrix}
    0 & 0 & \ldots &  0 & p^{-1}\\
    0 & 0 & \ldots  &p^{-1} & 0\\
    \vdots & \vdots & \vdots & \vdots & \vdots \\
     p^{-1} &0 & \ldots &  0 & 0.
\end{pmatrix}}_{(p-3) \times (p-3)}. \]

\subsection{Determinants associated to hyperplanes}

The following is a well-known fact.
\begin{prop}\label{ condition for invertible}
    Let $L$ be a Galois extension of $K$ of degree $n$ with Galois group $G=\{\sigma_1,\ldots,\sigma_n\}$.  Then $ \{x_1,x_2,\ldots, x_n\}$ is a $ K$-basis for $ L$ if and only if $ \sigma_j(x_i)$ is invertible.
\end{prop}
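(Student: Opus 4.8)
The plan is to prove both implications, the engine in each case being the elementary behaviour of the automorphisms $\sigma_j$ together with the classical linear independence of characters (Dedekind's lemma): distinct field automorphisms $\sigma_1,\ldots,\sigma_n$ of $L$ are linearly independent over $L$. Write $M=(\sigma_j(x_i))$ for the $n\times n$ matrix over $L$ whose $(i,j)$ entry is $\sigma_j(x_i)$, so that its rows are indexed by the elements $x_i$ and its columns by the automorphisms $\sigma_j$. Since there are exactly $n=\dim_K L$ of the $x_i$, being a $K$-basis is equivalent both to $K$-linear independence and to $K$-spanning, and I would use whichever formulation is convenient in each direction.

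For the easy direction I would prove the contrapositive. If $\{x_1,\ldots,x_n\}$ is not a basis, then it is $K$-linearly dependent, so there are $c_i\in K$, not all zero, with $\sum_i c_i x_i=0$. Each $\sigma_j$ fixes $K$ and is additive, hence applying $\sigma_j$ yields $\sum_i c_i\,\sigma_j(x_i)=0$ for every $j$. This is precisely a nontrivial $K$-linear relation among the rows of $M$, so $\det M=0$ and $M$ is not invertible.

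For the converse I would again argue by contraposition, but here the content lies deeper. Suppose $\{x_1,\ldots,x_n\}$ is a basis yet $\det M=0$. Then the columns of $M$ are $L$-linearly dependent, so there exist $d_j\in L$, not all zero, with $\sum_j d_j\,\sigma_j(x_i)=0$ for every $i$. Because the $x_i$ span $L$ over $K$ and each $\sigma_j$ is $K$-linear, this relation propagates from the basis to all of $L$, giving $\sum_j d_j\,\sigma_j(y)=0$ for every $y\in L$. That exhibits a nontrivial $L$-linear dependence among the distinct automorphisms $\sigma_1,\ldots,\sigma_n$, contradicting Dedekind's lemma, and so $\det M\neq 0$. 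This appeal to the independence of characters is the one genuinely nontrivial step, and it is exactly the point at which the Galois (rather than merely linear-algebraic) structure enters; anticipating it is the main obstacle.

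Alternatively, and more in keeping with the trace computations above, one can bypass Dedekind's lemma by relating $M$ directly to the trace form. Since $\tr(x_i x_k)=\sum_j\sigma_j(x_i x_k)=\sum_j\sigma_j(x_i)\,\sigma_j(x_k)$, the Gram matrix $A=(\tr(x_i x_k))$ factors as $A=MM^{t}$, whence $\det A=(\det M)^2$. As the trace form of $L/K$ is non-degenerate, $A$ is invertible exactly when $\{x_1,\ldots,x_n\}$ is a basis, and the displayed factorization then transfers this conclusion directly to the invertibility of $M$.
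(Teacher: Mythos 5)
Your proposal is correct and follows essentially the same route as the paper, whose entire proof is the one-line remark that the statement is an easy consequence of Dedekind's independence theorem; your main argument simply fills in the details of that reduction (rows dependent over $K$ when the $x_i$ are dependent, columns dependent over $L$ contradicting Dedekind when they are not). Your alternative via the factorization $A = MM^{t}$ of the trace Gram matrix is a valid bonus, and it fits naturally with the paper's earlier use of the non-degeneracy of the trace form, but it is not needed to match the paper's argument.
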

\begin{proof}
    Easy consequence of Dedekind's independence theorem.
\end{proof}
The following  theorem was shown in \cite{GQ2008}.
\begin{thm}\emph{(\cite[Theorem 5]{GQ2008})} \label{indepndent det}
    Let $L$ be a Galois extension of $K$ of degree $n$ with Galois group $G=\{\sigma_1,\ldots,\sigma_n\}$. Let $ x_1, \ldots,x_n$ be a $K$-basis of $L$. Let $B$ be the $ n \times n$ matrix  whose $ i,j$ entry is $\sigma_j(x_i)$, $1 \leq i,j \leq n$.  Then
    \begin{itemize}
        \item[(i)] if $K$ has characteristic $2$ or if the Sylow $2$-subgroup of $G$ is not cyclic, then $ \text{det}~ B \in K$. 
        \item[(ii)] if $K$ has characteristic  diferent from $2$ and $G$ has a cyclic Sylow $2$-subgroup, then $ \text{det}~B = \mu \alpha$, where $\mu \in K$, $\alpha^2 \in K$ and $ K(\alpha)$ is the unique quadratic extension of $ K$ contained in $L$.
    \end{itemize}
\end{thm}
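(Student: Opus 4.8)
The plan is to study how the Galois group $G$ acts on the single element $\det B \in L$ and to read the answer off from the sign character of the regular representation of $G$. First I would record two elementary facts. Computing $BB^{t}$ gives $(BB^{t})_{ik} = \sum_{j}\sigma_j(x_i)\sigma_j(x_k) = \sum_{j}\sigma_j(x_i x_k) = \tr(x_i x_k)$, so $BB^{t} = A$ is the trace-form matrix of the basis; since $A$ has entries in $K$, this yields $(\det B)^{2} = \det A \in K$. Moreover $\det B \neq 0$ by Proposition \ref{ condition for invertible}.

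Next I would compute the action of an arbitrary $\tau \in G$ on $\det B$. Applying $\tau$ entrywise sends $\sigma_j(x_i)$ to $(\tau\sigma_j)(x_i)$, so it replaces the $j$-th column of $B$ by the column indexed by $\tau\sigma_j$; that is, $\tau(B) = B P_\tau$, where $P_\tau$ is the permutation matrix of left translation $\lambda_\tau\colon \sigma_j \mapsto \tau\sigma_j$ on $G$. Since $\tau$ fixes the $\pm 1$ coefficients in the Leibniz expansion of the determinant, $\tau(\det B) = \det(\tau(B)) = \operatorname{sgn}(\lambda_\tau)\,\det B$. Writing $\epsilon(\tau) = \operatorname{sgn}(\lambda_\tau) \in \{\pm 1\}$ defines a homomorphism $\epsilon\colon G \to \{\pm1\}$ (the sign of the left-regular representation), and $K\cdot\det B$ is a $G$-stable line on which $G$ acts through $\epsilon$. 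Consequently $\det B$ is fixed exactly by $\ker\epsilon$, so $\det B \in \operatorname{Fix}(\ker\epsilon)$.

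The crux is to determine when $\epsilon$ is trivial. For $\tau$ of order $m$, left translation $\lambda_\tau$ partitions $G$ into $n/m$ cycles of length $m$, so $\epsilon(\tau) = \bigl((-1)^{m-1}\bigr)^{n/m} = (-1)^{(m-1)n/m}$; this equals $-1$ precisely when $m$ is even and $n/m$ is odd, i.e.\ when the $2$-part of $\ord(\tau)$ equals the full $2$-part of $n$. From this I would derive the standard dichotomy: $\epsilon$ is nontrivial if and only if the Sylow $2$-subgroup of $G$ is cyclic and nontrivial. Indeed, a generator of a cyclic Sylow $2$-subgroup has order equal to the $2$-part of $n$ and hence has $\epsilon = -1$; conversely any $\tau$ with $\epsilon(\tau) = -1$ satisfies $v_2(\ord(\tau)) = v_2(n)$, so a suitable power of $\tau$ generates a cyclic Sylow $2$-subgroup. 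I expect this group-theoretic step, together with the uniqueness claim below, to be the main obstacle, since it is precisely where the hypotheses on $G$ enter.

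Finally I would assemble the three cases. If $\operatorname{char}K = 2$ then $\{\pm1\}=\{1\}$ forces $\epsilon$ trivial, so $\det B \in L^{G} = K$; and if $\operatorname{char}K \neq 2$ with the Sylow $2$-subgroup noncyclic, then $\epsilon$ is trivial by the dichotomy and again $\det B \in K$, giving (i). If $\operatorname{char}K \neq 2$ and the Sylow $2$-subgroup is cyclic and nontrivial, then $\epsilon$ is nontrivial, $\ker\epsilon$ has index $2$, and $F := \operatorname{Fix}(\ker\epsilon)$ is a quadratic extension of $K$ containing $\det B$. Choosing any $\alpha$ with $F = K(\alpha)$ and $\alpha^{2}\in K$, and writing $\det B = c + d\alpha$ in the $K$-basis $\{1,\alpha\}$ of $F$, the relation $(\det B)^{2}\in K$ together with $\operatorname{char}K \neq 2$ forces $c = 0$, so $\det B = \mu\alpha$ with $\mu = d \in K$. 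For uniqueness of $K(\alpha)$ I would argue that quadratic subextensions correspond to index-$2$ subgroups of $G$, equivalently to nontrivial elements of $\hm(G,\mathbb{Z}/2)\cong\hm(G^{\mathrm{ab}},\mathbb{Z}/2)$; a standard transfer (focal subgroup) argument shows that when the Sylow $2$-subgroup is cyclic the $2$-primary part of $G^{\mathrm{ab}}$ is cyclic, so this group has order $2$, whence there is a unique quadratic subextension, necessarily $F$. This establishes (ii).
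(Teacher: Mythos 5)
First, a point of order: the paper never proves this theorem. It is imported verbatim from \cite{GQ2008} and used as a black box (e.g.\ to deduce Corollary \ref{indepndent det hyperplane}), so there is no in-paper proof to compare yours against, and your argument must be judged on its own merits. On those merits, your strategy is the natural (and classical) one, and almost every step checks out: $BB^{t}=(\tr(x_ix_k))$ does give $(\det B)^2\in K$; applying $\tau$ entrywise does give $\tau(B)=BP_\tau$, hence the character $\epsilon=\operatorname{sgn}\circ\lambda$ of the left-regular representation; the cycle count $\epsilon(\tau)=((-1)^{m-1})^{n/m}$ is correct; and the dichotomy that $\epsilon$ is nontrivial exactly when the Sylow $2$-subgroup is cyclic and nontrivial is right. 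Your uniqueness argument is also sound, with one caveat: cyclicity of the $2$-part of $G^{\mathrm{ab}}$ alone is not enough (it could be trivial); what you actually need, and what Burnside's normal $2$-complement theorem delivers for a cyclic Sylow $2$-subgroup, is that the $2$-part of $G^{\mathrm{ab}}$ is cyclic of full order $2^{v_2(n)}$, in particular nontrivial, so that $\hm(G,\mathbb{Z}/2)$ has exactly one nontrivial element.

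There is one genuine, though easily repaired, gap in case (ii). Writing $\det B=c+d\alpha$, the relation $(\det B)^2=c^2+d^2\alpha^2+2cd\alpha\in K$ together with $\operatorname{char}K\neq 2$ forces only $cd=0$, i.e.\ $c=0$ \emph{or} $d=0$; it does not by itself force $c=0$. You must exclude $d=0$, that is, exclude $\det B\in K$. This follows from facts you already established: since $\epsilon$ is nontrivial, there is $\tau$ with $\tau(\det B)=-\det B$, so $\det B\in K$ would give $2\det B=0$, hence $\det B=0$ in characteristic $\neq 2$, contradicting $\det B\neq 0$ (Proposition \ref{ condition for invertible}). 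Adding that sentence completes case (ii). A final remark, which is a defect of the quoted statement rather than of your proof: when $n$ is odd the Sylow $2$-subgroup is trivially cyclic, yet $L$ contains no quadratic subextension, so conclusion (ii) cannot hold as written; your reading of (ii) as ``cyclic \emph{and nontrivial}'' is the correct one, and your proof rightly places the odd-$n$ case under conclusion (i), namely $\det B\in K$.
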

 An analog of Theorem \ref{indepndent det} using an $ (n-1) \times (n-1)$ matrix formed from a basis of the trace zero hyperplane $ H_0= \ker \tr$ was also shown in \cite{GQ2008}.  Let $ H_0$ be the trace zero hyperplane, that is, 
 $ H_0 = \ker \tr $. Let $ b_2,\ldots, b_n$ be a $K$-basis of $H_0$. We can extend $ \{b_2,\ldots, b_n \}$ to a $K$-basis $\{b_1,b_2\ldots, b_n\}$ of $L$ for some $ b_1 \notin H_0$. Let $E(x)$ be the $n\times n$ matrix defined by 
 \begin{equation*}
   E(x):= \begin{pmatrix}
        \sigma_1(x)& \sigma_2(x)&\ldots& \sigma_n(x)\\
        \sigma_1(b_2)& \sigma_2(b_2)&\ldots& \sigma_n(b_2)\\
        \vdots& \vdots&\ldots& \vdots\\
        \sigma_1(b_n)& \sigma_2(b_n)&\ldots& \sigma_n(b_n)       
    \end{pmatrix} \qquad \text{for all $x \in L$}.
\end{equation*}
We now define an element $\theta \in \End_{K}(L)$ by setting $$\theta(x)= \det E(x).$$  We note that  $ \theta(b_1) = \det E(b_1) $ is non-zero by Proposition \ref{ condition for invertible}. Thus $\theta $ is a non-zero endomorphism and moreover it is easily checked that $ \theta $ vanishes on the  hyperplane $ H_0$. Consequently $ \theta $ is a rank-$ 1$ endomorphism and  by Section \ref{rank-1-endo}, $ \theta$ is of the form $c\otimes \tr.b$ for some $ c \in L $ and since $\ker(\theta) = H_0$ so we can take $b=1
$.  We now determine the value of $c$. Let $E_1$ be the $ (n-1) \times (n-1)$ matrix obtained by omitting the first row and the first column of $ E$. Thus \begin{equation*}
   E_1= \begin{pmatrix}
        \sigma_2(b_2)& \ldots& \sigma_n(b_2)\\
        \vdots& \ldots& \vdots\\
        \sigma_2(b_n)& \ldots& \sigma_n(b_n)       
    \end{pmatrix}
\end{equation*}
\begin{thm}\emph{(\cite[Corollary 2]{GQ2008})}
     With the above notation  we have $c = \det E_1$.
\end{thm}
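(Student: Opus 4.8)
The plan is to exploit the fact, already established above, that $\theta = c\otimes \tr.1$, so that $\theta(x) = \det E(x) = \tr(x)\,c$ for every $x\in L$. The right-hand side is the $L$-linear combination $c\sum_{j=1}^n \sigma_j(x)$ of the field automorphisms $\sigma_1,\dots,\sigma_n$. If I can also write $\det E(x)$ as an $L$-linear combination $\sum_j \alpha_j\,\sigma_j(x)$ with coefficients $\alpha_j\in L$ that do not depend on $x$, then comparing the two expressions and invoking Dedekind's independence of characters will force $\alpha_j = c$ for every $j$, and it will suffice to identify the single coefficient $\alpha_1$.

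First I would expand $\det E(x)$ by cofactors (Laplace expansion) along its first row, which is precisely $(\sigma_1(x),\sigma_2(x),\dots,\sigma_n(x))$. This gives
$$\det E(x) = \sum_{j=1}^n (-1)^{1+j}\,\sigma_j(x)\,M_{1j},$$
where $M_{1j}$ denotes the minor obtained by deleting the first row and the $j$-th column of $E(x)$. The crucial observation is that each $M_{1j}$ is built only from the rows $\sigma_\bullet(b_2),\dots,\sigma_\bullet(b_n)$ and hence is a fixed element of $L$, independent of $x$. Setting $\alpha_j = (-1)^{1+j}M_{1j}$ puts $\det E(x)$ in the desired form $\sum_j \alpha_j\,\sigma_j(x)$.

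Next I would equate the two descriptions of $\theta(x)$, obtaining $\sum_{j=1}^n(\alpha_j - c)\,\sigma_j(x) = 0$ for all $x\in L$. Since $\sigma_1,\dots,\sigma_n$ are distinct automorphisms of $L$, Dedekind's (Artin's) theorem on the linear independence of characters shows they are linearly independent over $L$, whence $\alpha_j = c$ for every $j$. In particular, taking $j=1$ yields $c = \alpha_1 = M_{11}$, and deleting the first row and first column of $E(x)$ leaves exactly the matrix $E_1$; thus $M_{11} = \det E_1$ and $c = \det E_1$.

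The argument is essentially routine once it is set up this way, and there is no serious obstacle beyond the bookkeeping. The only points needing care are to confirm that the minors $M_{1j}$ genuinely carry no dependence on $x$ (so the $\alpha_j$ are legitimate constants to which Dedekind independence applies), and to match the $j=1$ cofactor, whose sign is $(-1)^{1+1}=+1$, to $E_1$ on the nose.
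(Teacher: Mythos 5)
Your proof is correct, but it takes a genuinely different route from the paper's. The paper never expands along the first row: it forms the matrix $E'(x)$ obtained by summing all columns of $E(x)$ into the first column, observes that the resulting first-column entries $\tr(b_i)$, $i\geq 2$, vanish because the $b_i$ lie in $H_0$, and reads off $\det E(x)=\det E'(x)=\tr(x)\det E_1$ by expanding along that column. This one-line computation does not presuppose the previously established form $\theta=c\otimes\tr$; it re-derives it with the constant made explicit, and it needs no independence-of-characters argument. You instead take the normalization $\theta(x)=c\,\tr(x)$ as input, expand $\det E(x)$ along the first row as $\sum_{j}(-1)^{1+j}\sigma_j(x)M_{1j}$, and invoke Dedekind's theorem to match coefficients. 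That is valid: the minors $M_{1j}$ are indeed independent of $x$, the identity $\sum_j(\alpha_j-c)\sigma_j(x)=0$ for all $x\in L$ does force $\alpha_j=c$ by linear independence of the distinct automorphisms over $L$, and $M_{11}=\det E_1$ with sign $+1$. Your route buys a stronger conclusion as a byproduct, namely that \emph{every} signed first-row cofactor of $E(x)$ equals $\det E_1$, not just the $(1,1)$ one; its cost is the reliance on the earlier normalization $b=1$ (which itself rests on $\ker\theta=H_0$) and on Dedekind's theorem, whereas the paper's column-summing trick uses $\tr(b_i)=0$ directly and is self-contained.
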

\begin{proof}
    Let $E'(x)$ be the $n \times n$ matrix obtained by summing all the columns of $E(x)$ and placing the result as the first column (the remaining coloumns being unaltered). 
    Thus  \begin{equation*}
   E'(x):= \begin{pmatrix}
       \displaystyle \sum_{\sigma_i \in G}\sigma_i(x)& \sigma_2(x)&\ldots& \sigma_n(x)\\
       \displaystyle \sum_{\sigma_i \in G}\sigma_i(b_2)& \sigma_2(b_2)&\ldots& \sigma_n(b_2)\\
        \vdots& \vdots&\ldots& \vdots\\
      \displaystyle \sum_{\sigma_i \in G}\sigma_i(b_n)& \sigma_2(b_n)&\ldots& \sigma_n(b_n)       
    \end{pmatrix} = \begin{pmatrix}
       \displaystyle \tr(x)& \sigma_2(x)&\ldots& \sigma_n(x)\\
       \displaystyle 0& \sigma_2(b_2)&\ldots& \sigma_n(b_2)\\
        \vdots& \vdots&\ldots& \vdots\\
      \displaystyle 0& \sigma_2(b_n)&\ldots& \sigma_n(b_n)       
    \end{pmatrix}
\end{equation*} 
    Then \[\theta(x)=\det E(x) = \det E'(x) = \det E_1\tr(x),\]  whence $ \theta = \det E_1 \otimes \tr $ and consequently $ c = \det E_1$.
\end{proof}
Using the preceding result and some basic group theory the following is easy to deduce.
\begin{coro}\emph{(\cite[Theorem 6]{GQ2008})} \label{indepndent det hyperplane}
    Let $L$ be a Galois extension of $K$ of degree $n$ with Galois group $G=\{\sigma_1,\ldots,\sigma_n\}$. Let $ H_0$ be the trace-zero $K$-hyperplane in $L$. Let $ b_2, \ldots,b_n$ be a $K$-basis of $H_0$. Then the $ (n-1) \times (n-1)$ matrix $E_1$ whose $ i,j$ entry is $\sigma_j(b_i)$, $2 \leq i,j \leq n$, is invertible.  Moreover,
    \begin{itemize}
        \item[(i)] if $K$ has characteristic $2$ or if the Sylow $2$-subgroup of $G$ is not cyclic, then $ \text{det}~ E_1 \in K$. 
        \item[(ii)] if $K$ has characteristic  diferent from $2$ and $G$ has a cyclic Sylow $2$-subgroup, then $ \text{det}~E_1 = \mu \alpha$, where $\mu \in K$, $\alpha^2 \in K$ and $ K(\alpha)$ is the unique quadratic extension of $ K$ contained in $L$.
    \end{itemize}
\end{coro}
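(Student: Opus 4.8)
The plan is to reduce the whole statement to Theorem~\ref{indepndent det} applied to the enlarged basis $\{b_1,b_2,\ldots,b_n\}$ of $L$, using the scalar relation between $\det E_1$ and the associated $n\times n$ determinant already recorded in the preceding proof.

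First I would settle invertibility. Since $\{b_1,\ldots,b_n\}$ is a $K$-basis, Proposition~\ref{ condition for invertible} gives $\theta(b_1)=\det E(b_1)\neq 0$, so $\theta=c\otimes\tr$ is a nonzero endomorphism and its coefficient $c=\det E_1$ must be nonzero; hence $E_1$ is invertible.

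For the dichotomy I would bring in the $n\times n$ matrix $B$ whose $(i,j)$ entry is $\sigma_j(b_i)$, $1\le i,j\le n$, observing that $B=E(b_1)$. Specializing the column-sum identity $\det E(x)=\tr(x)\,\det E_1$ from the preceding proof to $x=b_1$ yields $\det B=\tr(b_1)\,\det E_1$. Because $b_1\notin H_0=\ker\tr$, the factor $\tr(b_1)$ lies in $K^\ast$, so $\det E_1$ and $\det B$ differ only by a nonzero element of $K$. Applying Theorem~\ref{indepndent det} to $B$ then delivers both cases simultaneously: in case~(i), $\det B\in K$ forces $\det E_1=\det B/\tr(b_1)\in K$; in case~(ii), $\det B=\mu'\alpha$ forces $\det E_1=(\mu'/\tr(b_1))\,\alpha=\mu\alpha$ with $\mu\in K$ and the very same $\alpha$.

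I expect no genuine obstacle here: all the arithmetic is carried by Theorem~\ref{indepndent det}, and the only thing to verify independently is the identity $\det B=\tr(b_1)\det E_1$ together with $\tr(b_1)\in K^\ast$. The single point that deserves care is that the passage from $\det B$ to $\det E_1$ must leave the element $\alpha$ unchanged in case~(ii); this is automatic, since the two determinants differ by the nonzero scalar $\tr(b_1)\in K$, which cannot alter the quadratic subextension $K(\alpha)\subseteq L$.
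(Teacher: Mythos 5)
Your proposal is correct and matches the route the paper intends: the paper leaves this corollary as an easy consequence of the identity $\det E(x)=\det E_1\,\tr(x)$ together with Theorem~\ref{indepndent det}, and your argument is precisely that deduction, specializing at $x=b_1$ to get $\det B=\tr(b_1)\det E_1$ with $\tr(b_1)\in K^\ast$ and transferring both cases (and the invertibility of $E_1$) from $B$ to $E_1$.
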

    Next we consider any $K$-hyperplane $H$ in $L$. Then $H$ arises as the kernel of a non-zero element in $L^{\ast}$. As $L^{\ast}= \tr.L$, therefore, $H=\ker \tr.a$ for some non-zero $a\in L$. Then $H = a^{-1}H_0$ and $c_2,\ldots,c_n$ is a basis of $H$ where $ c_i= a^{-1} b_i$. We can extend $ \{c_2,\ldots, c_n \}$ to a $K$-basis $\{c_1,c_2\ldots, c_n\}$ of $L$.  We  define a matrix $ D(x)$ by replacing $ b_i$ by $c_i$  in the matrix $E(x)$ above and $\theta_H \in \End_{K}(L)$ by setting $\theta_H(x)= \det D(x)$.  We note that  $ \theta_H(c_1) = \det D(c_1) \neq 0$ by Proposition \ref{ condition for invertible}. Thus $\theta_H \neq 0$ and moreover $ \theta_H $ vanishes on the  hyperplane $ H$. Consequently $ \theta_H $ is a rank-$ 1$ endomorphism  $d\otimes \tr.a$ for some nonzero  $ d \in L $.
    Recall that if $F$ is an intermediate subfield and $a \in L$ then the $L/F$-norm  
 $N_{L/F}(a)$ of $a $ is defined as $N_{L/F}(a) = \displaystyle\prod_{\theta \in \gal(L/F)} \theta(a)$. Clearly $N_{L/F}(a) \in F$.
 \begin{thm}
     With the notation as previously introduced, we have $d = N_{L/K}(a^{-1}) \det E_1$.
\end{thm}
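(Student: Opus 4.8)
The plan is to reduce the determination of $d$ for the general hyperplane $H$ to the already-established value $c = \det E_1$ for the trace-zero hyperplane $H_0$, exploiting the relation $c_i = a^{-1}b_i$ together with a single well-chosen substitution in the defining determinant. Recall from the construction that $\theta_H(x) = \det D(x) = d\,\tr(ax)$, while the preceding theorem gives $\theta(x) = \det E(x) = (\det E_1)\,\tr(x)$. The goal is therefore to compare $\det D$ with $\det E$, and the key device is to feed $D$ an argument that makes its top row acquire the same column factor already present in the lower rows.

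First I would substitute $x = a^{-1}y$ into $D(x)$. Since each $\sigma_j$ is a field homomorphism, the first row becomes $\sigma_j(a^{-1}y) = \sigma_j(a^{-1})\sigma_j(y)$, while for $i \geq 2$ the entries are $\sigma_j(c_i) = \sigma_j(a^{-1}b_i) = \sigma_j(a^{-1})\sigma_j(b_i)$. Thus every entry in the $j$-th column of $D(a^{-1}y)$ carries the common factor $\sigma_j(a^{-1})$, and what remains after removing it is exactly the $j$-th column of $E(y)$. Factoring $\sigma_j(a^{-1})$ out of the $j$-th column for each $j$ gives
\[
\det D(a^{-1}y) = \Bigl(\prod_{j=1}^{n}\sigma_j(a^{-1})\Bigr)\det E(y) = N_{L/K}(a^{-1})\,\det E(y),
\]
where the product over the entire Galois group $\{\sigma_1,\ldots,\sigma_n\}$ is precisely the norm $N_{L/K}(a^{-1})$.

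Next I would read off both sides using the rank-one descriptions. On the left, $\det D(a^{-1}y) = \theta_H(a^{-1}y) = d\,\tr(a\,a^{-1}y) = d\,\tr(y)$. On the right, $N_{L/K}(a^{-1})\det E(y) = N_{L/K}(a^{-1})(\det E_1)\,\tr(y)$. Equating these yields $d\,\tr(y) = N_{L/K}(a^{-1})(\det E_1)\,\tr(y)$ for every $y \in L$. Since $\tr$ is a nonzero functional on $L$, choosing any $y$ with $\tr(y) \neq 0$ and cancelling gives $d = N_{L/K}(a^{-1})\det E_1$, as claimed.

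I do not expect any serious obstacle here; the computation is entirely routine once the substitution $x = a^{-1}y$ is in place, and that substitution is the whole idea, since it is what lets the first row share the column factor $\sigma_j(a^{-1})$ with the remaining rows and thereby turns the column-scaling into a clean norm. The one point deserving a word of care is the final cancellation of $\tr(y)$: it is legitimate precisely because $\tr$ is not identically zero, so the equality of the two rank-one endomorphisms $d\otimes\tr.a$ and $\bigl(N_{L/K}(a^{-1})\det E_1\bigr)\otimes\tr.a$ forces equality of their scalar coefficients.
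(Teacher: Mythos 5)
Your proof is correct and follows essentially the same route as the paper: both arguments factor $\sigma_j(a^{-1})$ out of each column of $D$ to produce the norm $N_{L/K}(a^{-1})$ and then reduce to the identity $\det E(\cdot) = (\det E_1)\,\tr(\cdot)$ established for the trace-zero hyperplane. The only cosmetic difference is that you substitute $x = a^{-1}y$ and cite the earlier theorem, whereas the paper writes $\sigma_j(x) = \sigma_j(a^{-1})\sigma_j(ax)$ in place and repeats the column-summing computation inline.
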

\begin{proof}
    \begin{align*}
    \theta_H
    (x) &=\det D(x)\\
    &= \det \begin{pmatrix}
        \sigma_1(x)& \sigma_2(x)&\ldots& \sigma_n(x)\\
        \sigma_1(c_2)& \sigma_2(c_2)&\ldots& \sigma_n(c_2)\\
        \vdots& \vdots&\ldots& \vdots\\
        \sigma_1(c_n)& \sigma_2(c_n)&\ldots& \sigma_n(c_n)       
    \end{pmatrix}\\
    &= \det \begin{pmatrix}
       \sigma_1(a^{-1}) \sigma_1(ax)& \sigma_2(a^{-1}) \sigma_2(ax)&\ldots& \sigma_n(a^{-1}) \sigma_n(ax)\\
        \sigma_1(a^{-1}b_2)& \sigma_2(a^{-1}b_2)&\ldots& \sigma_n(a^{-1}b_2)\\
        \vdots& \vdots&\ldots& \vdots\\
        \sigma_1(a^{-1}b_n)& \sigma_2(a^{-1}b_n)&\ldots& \sigma_n(a^{-1}b_n)       
    \end{pmatrix}\\
    &= N_{L/K}(a^{-1})\det D'(x)\\
    &= N_{L/K}(a^{-1}) \det E_1\tr(ax)\\
    &=  N_{L/K}(a^{-1}) \det E_1 \otimes \tr(ax),
    \end{align*}
   where $D'(x)$ be the $n \times n$ matrix obtained by summing all the columns of $D(x)$ and placing the result as the first column (the remaining coloumns being unaltered). 
    Consequently $ d =  N_{L/K}(a^{-1})\det E_1$.
\end{proof}

\subsection{Criteria for Linear Independence in a Cyclic Extension}
\begin{thm}
    Let $L $ be a cyclic Galois extension of $K$ of degree $n$ with  $\gal(L/K) = \langle \sigma \rangle$. Let $k$ be an integer satisfying $1\leq k \leq n$ and let $b_1,\ldots,b_k$ be elements of $L$. Then these elements are linearly dependent over $K$ if and only if \[\det S = 0,\]
    where 
    \begin{equation*}
      S=\left(\sigma^{i-1}(b_j)\right) = \begin{pmatrix}
            b_1 & b_2 & \ldots & b_n\\
            \sigma(b_1) & \sigma(b_2) & \ldots & \sigma(b_k)\\
           \vdots& \vdots&\ldots& \vdots\\
           \sigma^{k-1}(b_1) & \sigma^{k-1}(b_2) & \ldots & \sigma^{k-1}(b_k)
        \end{pmatrix}.
    \end{equation*}
\end{thm}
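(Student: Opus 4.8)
The plan is to prove the two implications separately; the forward implication is routine, while the reverse one carries all the weight.

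\textbf{Forward direction.} Suppose $b_1,\dots,b_k$ are $K$-linearly dependent, say $\sum_{j=1}^{k} c_j b_j = 0$ with $c_j \in K$ not all zero. Since each $\sigma^{i-1}$ fixes $K$ pointwise, applying $\sigma^{i-1}$ gives $\sum_{j=1}^{k} c_j\,\sigma^{i-1}(b_j) = 0$ for every $i$, so the columns of $S$ satisfy a nontrivial $L$-linear relation and hence $\det S = 0$. Equivalently, $b \mapsto (\sigma^{i-1}(b))_i$ is $K$-linear in each argument, so $\det S$ is a $K$-multilinear alternating function of $(b_1,\dots,b_k)$ and therefore vanishes on every $K$-dependent tuple.

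\textbf{Reverse direction.} First I would translate $\det S = 0$ into a usable relation: over the field $L$ a square matrix is singular if and only if its columns are linearly dependent, so $\det S = 0$ produces $c_1,\dots,c_k \in L$, not all zero, with
\[ \sum_{j=1}^{k} c_j\,\sigma^{i}(b_j) = 0 \qquad (i = 0,1,\dots,k-1). \]
Thus the whole theorem reduces to the following claim, which is its real content: \emph{if the $b_j$ satisfy such a system for some nonzero coefficient vector $(c_j)$, then they are already $K$-linearly dependent}. The difficulty is that a priori the $c_j$ lie only in $L$, so I must manufacture from them a nontrivial relation with coefficients in the base field $K$.

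\textbf{The descent (main obstacle).} The key step, and the step I expect to be delicate, is a descent on the number of nonzero $c_j$. Normalise so that some $c_{j_0} = 1$. Applying $\sigma$ to each equation yields $\sum_j \sigma(c_j)\,\sigma^{i}(b_j) = 0$ for $i = 1,\dots,k$; subtracting the $\sigma$-image of the $i$-th equation from the $(i+1)$-th and then applying $\sigma^{-1}$ produces a shorter system
\[ \sum_{j=1}^{k} e_j\,\sigma^{i}(b_j) = 0 \quad (i = 0,1,\dots,k-2), \qquad e_j := \sigma^{-1}(c_j) - c_j. \]
Because $\gal(L/K) = \langle \sigma \rangle$ has fixed field exactly $K$, one has $e_j = 0$ if and only if $c_j \in K$; in particular $e_{j_0} = 0$, so the new coefficient vector $(e_j)$ has strictly smaller support than $(c_j)$, at the cost of one fewer equation. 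If $(e_j)$ is identically zero then every $c_j \in K$ and the initial equation $\sum_j c_j b_j = 0$ is the desired $K$-dependence; otherwise I repeat. Since the support drops by at least one at each stage while remaining bounded by the number of surviving equations, after finitely many steps I reach either the zero vector (giving a $K$-relation) or a one-term relation $c_{j_0} b_{j_0} = 0$ forcing $b_{j_0} = 0$; in both cases the $b_j$ are $K$-linearly dependent. The one point needing care is the bookkeeping of the index ranges — checking that a genuinely nontrivial, strictly shorter relation survives at each step; the rest is formal. (As an alternative, the same claim follows from the kernel bound $\dim_K \ker\big(\sum_i c_i \sigma^{i}\big) \le k-1$ for a nonzero operator of this shape, obtainable from the skew polynomial ring $L[t;\sigma]$ together with the identification $L[t;\sigma]/(t^{n}-1) \cong \End_K(L)$; but the elementary descent above is more self-contained.)
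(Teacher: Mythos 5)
Your proposal is correct, but it follows a genuinely different route from the paper's. The paper proves the contrapositive by induction on $k$: assuming $b_1,\dots,b_k$ are $K$-independent (so $b_1\neq 0$), it reverses the rows of $S$ and row-reduces, subtracting from each row a $\sigma$-twisted multiple of the row above so as to clear the first column; the surviving $(k-1)\times(k-1)$ block has the same shape as $S$ but in the new elements $x_j=b_j-\frac{b_1}{\sigma(b_1)}\sigma(b_j)$, giving the factorization $\det U=\sigma^{k-1}(b_1)\det V$, and the induction hypothesis converts $\det V=0$ into a $K$-dependence of the $x_j$, which the fixed-field property of $\langle\sigma\rangle$ turns into the contradiction $(a_2b_2+\dots+a_kb_k)/b_1\in K$. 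You instead argue directly: from $\det S=0$ you extract an $L$-linear dependence among the columns and run a descent on its coefficient vector, replacing $(c_j)$ by $(\sigma^{-1}(c_j)-c_j)$ after normalizing $c_{j_0}=1$, so the elements $b_j$ are never modified and only the coefficients shrink. The two arguments are essentially transposes of each other --- the paper eliminates the element $b_1$, you eliminate the coefficient $c_{j_0}$ --- and both exploit cyclicity through exactly one fact, namely that an element fixed by $\sigma$ lies in $K$. The paper's version buys an explicit determinant factorization and a clean inductive statement (the full theorem for $k-1$); yours avoids passing to the contrapositive, and its one delicate point --- guaranteeing that a nontrivial, strictly shorter relation survives each step --- is correctly dispatched by your invariant that the support stays bounded by the number of remaining equations (so that when a single equation remains the support is $1$, forcing some $b_{j_0}=0$). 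Your parenthetical skew-polynomial alternative is also viable but, as you concede, it requires the kernel-dimension bound for nonzero operators of the form $\sum_i c_i\sigma^i$ of degree less than $n$, which is a genuine extra input rather than a simplification.
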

\begin{proof}
  If $ b_1,\ldots, b_k$  are linearly dependent then it is easily seen   that $\det S = 0$.  Conversely, let $b_1,\ldots,b_k$ be linearly independent over $K$. We will show that $ \det S \neq 0$ by induction on $k$.  It is clear that the assertion holds for $ k=1$. 

  Note that it is sufficient to show that $\det S' \neq 0$, where
  \begin{equation*}
       S'= \begin{pmatrix}
        \sigma^{k-1}(b_1) & \sigma^{k-1}(b_2) & \ldots & \sigma^{k-1}(b_k)\\
        \sigma^{k-2}(b_1) & \sigma^{k-2}(b_2) & \ldots & \sigma^{k-2}(b_k)\\ 
           \vdots& \vdots&\ldots& \vdots\\
           b_1 & b_2 & \ldots & b_k
        \end{pmatrix}
    \end{equation*}
    is obtained from $S$ by row-interchange operations.  
   For $i= k-1,\ldots,1$ we successively substract $\gamma_i \times \text{row $i$}$ from row $i+1$ where $$\gamma_i = \sigma^{k-i-1}(b_1) (\sigma^{k-i}(b_1))^{-1}= \sigma^{k-i-1}(b_1(\sigma(b_1))^{-1}).$$ The resulting matrix is
  \begin{equation*}
       U= \begin{pmatrix}
        \sigma^{k-1}(b_1) & \sigma^{k-1}(b_2) & \ldots & \sigma^{k-1}(b_k)\\
        0 & \sigma^{k-2}(x_2) & \ldots & \sigma^{k-2}(x_k)\\ 
           \vdots& \vdots&\ldots& \vdots\\
           0 & x_2 & \ldots & x_k
        \end{pmatrix}.
    \end{equation*}
     where 
     \begin{equation} \label{new_var}
     x_j= b_j - \frac{b_1}{\sigma(b_1)} \sigma(b_j), \qquad   2 \leq j \leq k. \end{equation}
     Then \begin{equation*}
         \det U = \sigma^{k-1}(b_1) \det V,
     \end{equation*}  
  where $V$ be the $(k-1) \times (k-1)$ matrix obtained by deleting the first row and column of $U$.  We also note that the $i,j$-entry of $V $ is $$\sigma^{k-1+i}(x_j),~~~~~ 1\leq i \leq k-1, ~ 1 \leq j \leq k, $$ where $$ x_j= b_j - \frac{b_1}{\sigma(b_1)} \sigma(b_j).$$ 

  Clearly $\det(S') \ne 0$ if and only if $\det(V) \ne 0$.
  As $V$ has the same form as $S'$ by the induction hypothesis $\det(V) = 0$ implies that $x_2, \cdots, x_k$ are linearly dependent, that is, $$ a_2 x_2  + \ldots + a_k x_k = 0,$$ for suitable scalars 
  $a_2,\ldots,a_k$ in $ K$, not all zero. Thus by \eqref{new_var}
  \begin{align*}
      a_2 b_2 + \ldots a_k b_k &= \frac{b_1}{\sigma(b_1)} \sigma( a_2 b_2 + \ldots a_k b_k)
        \end{align*}
        whence
        \[
      \sigma\left(\frac {a_2 b_2 + \ldots a_k b_k}{ b_1}\right) = \frac {a_2 b_2 + \ldots a_k b_k}{ b_1} \] 

  It follows that $\frac {a_2 b_2 + \ldots a_k b_k}{ b_1}  \in K$, thus contradicting the linear independence of $b_1, \cdots, b_k$.  The theorem now follows.
\end{proof}
\section*{Declaration of competing interest}
The authors declare that they have no known competing financial interests or personal
relationships that could have appeared to influence the work reported in this paper.
\section*{Acknowledgements}
The  second author thanks the National Board of Higher Mathematics (NBHM) for financial support.

\end{document}